\newtheorem{theorem}{Theorem}
\newtheorem{problem}{Problem}
\newtheorem{corollary}{Corollary}
\newtheorem{remark}{Remark}
\begin{document}
\title{\Large On minimal additive complements of integers}
\date{}
\author{\large S\'andor Z. Kiss,$^{1}$\footnote{
Email:~kisspest@cs.elte.hu. This research was supported by the
National Research, Development and Innovation Office NKFIH Grant
No. K115288. }~ Csaba S\'andor,$^{1}$\footnote{
Email:~csandor@math.bme.hu. This author was supported by the OTKA
Grant No. K109789. This paper was supported by the J\'anos Bolyai
Research Scholarship of the Hungarian Academy of Sciences.}
~and~Quan-Hui Yang$^{2}$\footnote{Email:~yangquanhui01@163.com.
This author was supported by the National Natural Science
Foundation for Youth of China, Grant No. 11501299, the Natural
Science Foundation of Jiangsu Province, Grant Nos.
BK20150889,~15KJB110014 and the Startup Foundation for Introducing
Talent of NUIST, Grant No. 2014r029.} }
\date{} \maketitle
 \vskip -3cm
\begin{center}
\vskip -1cm { \small 1. Institute of Mathematics, Budapest
University of Technology and Economics, H-1529 B.O. Box, Hungary}
 \end{center}

 \begin{center}
{ \small 2. School of Mathematics and Statistics, Nanjing University of Information \\
Science and Technology, Nanjing 210044, China }
 \end{center}

\begin{abstract} Let $C,W\subseteq \mathbb{Z}$. If $C+W=\mathbb{Z}$, then the set $C$ is
called an additive complement to $W$ in $\mathbb{Z}$. If no proper
subset of $C$ is an additive complement to $W$, then $C$ is called
a minimal additive complement. Let $X\subseteq \mathbb{N}$. If
there exists a positive integer $T$ such that $x+T\in X$ for all
sufficiently large integers $x\in X$, then we call $X$ eventually
periodic. In this paper, we study the existence of a minimal
complement to $W$ when $W$ is eventually periodic or not. This
partially answers a problem of Nathanson.

\end{abstract}

{\it 2010 Mathematics Subject Classifications:} Primary 11B13, 11B34.

{\it Key words and phrases:} Additive complements, minimal complements,
periodic sets.

\section{Introduction}

Let $\mathbb{N}$ denote the set of nonnegative integers and $\mathbb{Z}$ be the set of integers. For $A,B\subseteq \mathbb{Z}$ and $k \in \mathbb{Z}$, let $A+B=\{a+b:~a\in A,~b\in B\}$ and $kA = \{ka: a \in A\}$.
If $A+B=\mathbb{Z}$, then $A$ is called an additive complement to
$B$ in $\mathbb{Z}$. If no proper subset of $A$ is a complement to
$B$, then $A$ is called a minimal complement to $B$ in
$\mathbb{Z}$.

It is easy to see that if $A\subseteq \mathbb{Z}$ is a (minimal) complement to $B \subseteq \mathbb{Z}$, then $A$ is also a (minimal) complement to $B+d$, $d\in \mathbb{Z}$, where $B+d=\{ b+d:b\in B\} $.

In 2011, Nathanson \cite{nath} proved the following theorem.

\noindent{\bf  Nathanson's theorem} {\rm (See \cite[Theorem
8]{nath}).} {\em  Let $W$ be a nonempty, finite set of integers.
In $\mathbb{Z}$, every complement to $W$ contains a minimal
complement to $W$.}

In the same paper, Nathanson also posed the following problem.

\noindent{\bf  Problem} {\rm (See \cite[Problem 11]{nath}).} Let
$W$ be an infinite set of integers. Does there exist a minimal
complement to $W$? Does there exist a complement to $W$ that does not
contain a minimal complement?

%In 2012, Chen and Yang \cite{chen} answered Nathanson's problem in
%some aspects.

For the second part of the above problem, in 2012, Chen and Yang \cite{chenyang} gave two infinite sets $W_1$ and $W_2$ of integers such that there exists a complement to $W_1$ that does not contain a minimal complement and every complement to $W_2$ contains a minimal complement.

For the first part of the above problem, in 2012, Chen and Yang \cite{chenyang} proved the following results.

\noindent{\bf  Theorem A} (See \cite[Theorem 1]{chenyang}).~{\em
Let $W$ be a set of integers with $\inf W=-\infty$ and $\sup
W=+\infty$. Then there exists a minimal complement to $W$.}

By Theorem A, now we only need to consider the cases $\inf W>-\infty$
or $\sup W<+\infty$. Without loss of generality, we may assume
that $\inf W>-\infty$.

\noindent{\bf  Theorem B} (See \cite[Theorem 2]{chenyang}).~{\em
Let $W=\{1=w_1<w_2<\cdots\}$ be a set of integers and
$$\overline{W}=(\mathbb{Z} \cap (0, +\infty)) \setminus W=\{\overline{w_1}
<\overline{w_2}<\cdots\}.$$

(a) If $\limsup_{i\rightarrow +\infty}(w_{i+1}-w_i)=+\infty$, then
there exists a minimal complement to $W$.

(b) If $\lim_{i\rightarrow
+\infty}(\overline{w_{i+1}}-\overline{w_i})=+\infty$, then there
does not exist a minimal complement to $W$.}

Let $W=\cup_{k=0}^{\infty} [10^k,2\times 10^k]$. Then it is clear
that both $\limsup_{i\rightarrow +\infty}(w_{i+1}-w_i)=+\infty$
and $\limsup_{i\rightarrow
+\infty}(\overline{w_{i+1}}-\overline{w_i})=+\infty$ hold. Hence
$\lim_{i\rightarrow
+\infty}(\overline{w_{i+1}}-\overline{w_i})=+\infty$ in Theorem B
(b) cannot be changed to $\limsup_{i\rightarrow
+\infty}(\overline{w_{i+1}}-\overline{w_i})=+\infty$.

In this paper, we will give further results on Nathanson's problem
and deal with some sets $W$ do not satisfy the conditions of
Theorem B.

First we give some definitions. Let $S\subseteq \mathbb{N}$. Denote by $S\mod m$ the set of residues of $S$ modulo $m$, i.e.,
$$S\mod m =\{ r: r\in \{0,1,\dots ,m-1\}, r\equiv s~(\text{mod}~m)\text{ for some }s\in S\}.$$
Let $X\subseteq \mathbb{N}$. If
there exists a positive integer $T$ such that $x+T\in X$ for all
$x\in X$, then we call $X$ {\em{periodic with period $T$}}. If
$X\cup C$ is a periodic set for some finite set $C\subseteq
\mathbb{N}$, then we call $X$ {\em{quasiperiodic}}. If there exists
a positive integer $T$ such that $x+T\in X$ for all sufficiently
large integers $x\in X$, then we call $X$ {\em {eventually
periodic with period $T$}}. Clearly, a periodic set must be
quasiperiodic and a quasiperiodic set must be eventually periodic.
If $W$ is eventually periodic with $|\mathbb{N}\setminus W| = +\infty$, then both $\lim_{i\rightarrow
+\infty}(w_{i+1}-w_i)<+\infty$ and $\lim_{i\rightarrow
+\infty}(\overline{w_{i+1}}-\overline{w_i})<+\infty$ hold. Hence
$W$ does not satisfy the conditions of Theorem B.

Suppose that $W$
is an eventually periodic set and $m$ is a positive
period. By shifting a number, we may assume that $W$ has the
following structure:
\begin{eqnarray}\label{def1}W=(m\mathbb{N}+X_m)\cup
Y^{(0)}\cup Y^{(1)},\end{eqnarray}
where $X_m\subseteq \{0, 1, \dots{}, m-1\}, Y^{(0)}\subseteq \mathbb{Z}^{-},Y^{(1)}$ are finite sets with
$Y^{(0)}~\text{mod}~ m\subseteq
X_m$ and
$(Y^{(1)}~\text{mod}~m) \cap X_m = \emptyset$.

For example, if $W=\{2,4,7,8,9,12,13,17,18,22,23,27,28,\ldots\}$, then by shifting a number $5$, we may assume that
$$W=\{-3,-1,2,3,4,7,8,12,13,17,18,22,23,\ldots\}.$$ Hence $m=5$, $X_m=\{2,3\}$, $Y^{(0)}=\{-3\}$, $Y^{(1)}=\{-1,4\}$.

In this paper, we study that what conditions are needed to ensure the existence of a minimal complement to $W$. First we prove a sufficient condition.

%\begin{theorem}\label{thm1} Let $W$ be a set of integers. Then there exists a
%finite minimal complement to $W$ if and only if $\inf
%W=-\infty,~\sup W=+\infty$ and the difference between two
%consecutive term of $W$ is bounded.
%\end{theorem}

%\begin{theorem}\label{thm1} Let $W\subseteq \mathbb{N}$ be a
%qusiperiodic set. Then there does not exist a minimal complement
%to $W$. \end{theorem}

\begin{theorem}\label{thm3} Let $W$ be defined in \eqref{def1}. If there exists a minimal complement
to $W$, then there exists $C\subseteq \{0,1,\dots ,m-1\}$ such that the
following two conditions hold:

(a) $C+(X_m\cup Y^{(1)}) \mod m=\{0,1,\dots ,m-1\}$;

(b) For any $c\in C$, there exists $y\in Y^{(1)}$ such that
$c+y\not \equiv c'+x~(\text{mod}~m)$ for any $c'\in C$ and $x\in X_m$.
\end{theorem}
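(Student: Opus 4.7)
The plan is to take $C := A \bmod m$, where $A$ is the given minimal complement to $W$, and verify both (a) and (b) for this $C$. For (a), reducing $A + W = \mathbb{Z}$ modulo $m$ yields $C + (W \bmod m) \equiv \{0,1,\dots,m-1\} \pmod m$, and the hypothesis $Y^{(0)} \bmod m \subseteq X_m$ gives $W \bmod m = X_m \cup (Y^{(1)} \bmod m)$, so (a) follows.

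For (b) I would proceed by contradiction: suppose some $c \in C$ fails (b), so for every $y \in Y^{(1)}$ there exist $c'_y \in C$, $x_y \in X_m$ with $c + y \equiv c'_y + x_y \pmod m$. An immediate observation I would record first is that $c'_y \ne c$ for every $y$, since otherwise $y \equiv x_y \pmod m$, contradicting $(Y^{(1)} \bmod m) \cap X_m = \emptyset$. The goal is then to exhibit $a \in A_c := A \cap (c + m\mathbb{Z})$ such that $A \setminus \{a\}$ is still a complement to $W$, contradicting minimality of $A$.

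The verification that $A \setminus \{a\}$ is a complement reduces to showing that for every $w \in W$, the integer $a+w$ lies in $(A \setminus \{a\}) + W$, and I would split into three cases according to which piece of $W$ contains $w$. If $w \in m\mathbb{N} + X_m$, any $a' \in A_c$ with $a' < a$ gives $a + w - a' \in m\mathbb{N} + X_m \subseteq W$. If $w = y'' \in Y^{(0)}$, write $y'' = -jm + x''$ and use $a' \in A_c$ with $a' \le a - jm$; a direct residue computation shows $a + y'' - a' \in m\mathbb{N} + X_m$. If $w = y \in Y^{(1)}$, using $c + y \equiv c'_y + x_y \pmod m$ and any $a' \in A_{c'_y}$ with $a' \le a+y$, the difference $a+y-a'$ has residue $x_y \in X_m$ and is nonnegative, hence lies in $m\mathbb{N} + X_m \subseteq W$; crucially $a' \ne a$ because $c'_y \ne c$. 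Thus it would suffice to find $a \in A_c$ satisfying $(\ast)$ $A_c$ contains an element $\le a - J_0 m$ (where $J_0 := \max_{y'' \in Y^{(0)}} \lceil |y''|/m \rceil$, and additionally some element strictly below $a$ if $J_0 = 0$) and $(\ast\ast)$ for each $y \in Y^{(1)}$, $A_{c'_y}$ contains some element $\le a + y$.

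The main obstacle is the existence of such $a \in A_c$. If $A_c$ is unbounded above, this is straightforward: fix any $a_0 \in A_c$ and any $b_y \in A_{c'_y}$ for each $y \in Y^{(1)}$, and choose $a \in A_c$ exceeding both $a_0 + J_0 m$ and $\max_y (b_y - y)$. If $A_c$ is bounded above, hence finite, the argument is more delicate: I would use $A + W = \mathbb{Z}$ applied to very negative integers in residues $\equiv c + y \pmod m$ for $y \in Y^{(1)}$, together with condition (a) and the fact that every such residue is reachable only via $(c'_y, x_y)$ under failure of (b), to deduce that some $A_{c'_y}$ must be unbounded below. This forces $(\ast\ast)$ automatically at $a = \max A_c$, while $(\ast)$ is verified by exhibiting a sufficiently small element of $A_c$. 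Carefully navigating this boundedness subcase, and in particular handling the smallest possible sizes of $A_c$, is the technical heart of the argument.
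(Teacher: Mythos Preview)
Your choice $C := A \bmod m$ is the source of a real gap. The paper takes instead
\[
C=\{\,j\in\{0,\dots,m-1\}:\ |A_j\cap\mathbb{Z}^{-}|=\infty\,\},
\]
and this restriction is exactly what makes the removal argument for (b) go through cleanly. With the paper's $C$, every $c\in C$ has $A_c$ unbounded below, so for \emph{any} $a\in A_c$ one can always pick $a'\in A_c$ with $a'<a$ as far down as needed; your conditions $(\ast)$ and $(\ast\ast)$ are then automatic, and the contradiction with minimality follows immediately. The price is that (a) is no longer a one--line modular reduction: one must argue that residues $t\notin C$ contribute nothing to covering sufficiently negative integers (since $A_t$ is bounded below, so is $A_t+W$), and hence $C+(X_m\cup Y^{(1)})$ already hits every residue.

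Your version of the argument breaks down precisely in the subcase you flag as ``the technical heart'': when $A_c$ is finite, possibly a singleton $\{a\}$. In your treatment of $w\in m\mathbb{N}+X_m$ you need some $a'\in A_c$ with $a'<a$; if $A_c=\{a\}$ no such $a'$ exists, and the failure of (b) at $c$ gives you no leverage here, since that hypothesis concerns only residues $c+y$ with $y\in Y^{(1)}$, not residues $c+x$ with $x\in X_m$. Concretely, nothing prevents the existence of some $x\in X_m$ with $c+x\not\equiv c''+x''\pmod m$ for every $c''\in C\setminus\{c\}$ and $x''\in X_m$; then arbitrarily large $n\equiv c+x$ have no representation avoiding $A_c$, so $a$ is genuinely needed and you obtain no contradiction. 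Your sentence ``$(\ast)$ is verified by exhibiting a sufficiently small element of $A_c$'' is therefore unjustified in this case. The fix is not to patch this subcase but to choose $C$ as the paper does, so that the subcase never arises.
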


\begin{remark}\label{remark2} By the proof of Theorem \ref{thm3},
we know that Theorem \ref{thm3} also holds when $Y^{(1)}$ is an
infinite set with $|Y^{(1)}\cap \mathbb{Z}^{-}|<+\infty$.
\end{remark}

Let $m=3$, $X_m=\{0\}$, $Y^{(1)}\subseteq 3\mathbb{N}+1$. By
Theorem \ref{thm3}, we have the following corollary.

\begin{corollary}\label{coro1} Let $Y\subseteq 3\mathbb{N}+1$ and
$W=3\mathbb{N}\cup Y$. Then there does not exist a minimal
complement to $W$.
\end{corollary}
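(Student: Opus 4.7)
My plan is to invoke Theorem~\ref{thm3} (and, if $Y$ is infinite, its extension in Remark~\ref{remark2}) with the parameters $m=3$, $X_m=\{0\}$, $Y^{(0)}=\emptyset$, and $Y^{(1)}=Y$. The structural conditions of~\eqref{def1} are immediate: $Y^{(0)}\bmod 3=\emptyset\subseteq X_m$, and $Y^{(1)}\bmod 3\subseteq\{1\}$ is disjoint from $X_m=\{0\}$ because $Y\subseteq 3\mathbb{N}+1$. Moreover $|Y\cap\mathbb{Z}^-|=0$, so the hypothesis of Remark~\ref{remark2} is met in case $Y$ is infinite. Thus if a minimal complement to $W$ existed, there would be a set $C\subseteq\{0,1,2\}$ satisfying both conclusions (a) and (b) of Theorem~\ref{thm3}.

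The core of the argument is to rule this out. Since $(X_m\cup Y^{(1)})\bmod 3\subseteq\{0,1\}$, condition (a) reduces to
\[
C\cup(C+1)\equiv\{0,1,2\}\pmod 3.
\]
Every $y\in Y$ satisfies $y\equiv 1\pmod 3$ and $X_m=\{0\}$, so condition (b) reduces to: for every $c\in C$, $c+1\bmod 3\notin C$, i.e.,
\[
C\cap(C+1)\equiv\emptyset\pmod 3.
\]
(If $Y=\emptyset$ the situation is even more constrained, since (b) then has no admissible $y$ and forces $C=\emptyset$, which contradicts (a).)

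The conclusion now follows by a one-line parity check: conditions (a) and (b) together demand that $C$ and $C+1$ partition $\mathbb{Z}/3\mathbb{Z}$, yet $|C\sqcup(C+1)|=2|C|$ is even, whereas $|\mathbb{Z}/3\mathbb{Z}|=3$ is odd — a contradiction. I do not anticipate any genuine obstacle; the whole argument is a direct specialization of Theorem~\ref{thm3}, and the only step requiring any thought is recognizing the parity obstruction in $\mathbb{Z}/3\mathbb{Z}$ that rules out a partition by two translates of a single set.
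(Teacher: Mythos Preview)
Your proposal is correct and follows exactly the approach the paper intends: specialize Theorem~\ref{thm3} (extended via Remark~\ref{remark2} when $Y$ is infinite) to $m=3$, $X_m=\{0\}$, $Y^{(0)}=\emptyset$, $Y^{(1)}=Y$, and then verify that no $C\subseteq\{0,1,2\}$ can satisfy (a) and (b). The paper leaves that last verification to the reader, whereas you supply a clean parity argument (that $C$ and $C+1$ would have to partition $\mathbb{Z}/3\mathbb{Z}$), which is a pleasant alternative to a direct case-check on the eight subsets.
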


\begin{remark}\label{remark2} We can choose an infinite set
$Y$ in Corollary \ref{coro1} such that $W$ is not eventually
periodic. Hence, there exists an infinite, not eventually periodic
set $W\subseteq \mathbb{N}$ such that $w_{i+1}-w_i\in \{1,2,3\}$
for all $i$, and there does not exist a minimal complement to $W$.
\end{remark}

\begin{remark}\label{remark1} If $W\subseteq \mathbb{N}$ is a
quasiperiodic set, then $Y^{(1)}=\emptyset$ and the condition (b)
in Theorem \ref{thm3} does not hold. Hence there does not exist a
minimal complement to $W$.
\end{remark}

In the next step we prove a necessary condition.

\begin{theorem}\label{thm4} Let $W$ be defined in \eqref{def1}. Suppose that there exists $C\subseteq
\{0,1,\dots ,m-1\}$ such that the following two conditions hold:

(a) $C+(X_m\cup Y^{(1)})\mod m=\{0,1,\dots ,m-1\}$;

(b) For any $c\in C$, there exists $y\in Y^{(1)}$ such that $c+y\not \equiv c'+x~(\text{mod}~m)$ for
any $c'\in C \setminus \{c\}$ and $x\in X_m \cup Y^{(1)}$.

Then there exists a minimal complement to $W$.
\end{theorem}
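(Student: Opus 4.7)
The plan is to build a minimal complement explicitly as a union $A=\bigcup_{c\in C}A_{c}$ with $A_{c}\subseteq c+m\mathbb{Z}$, in such a way that complementarity and the existence of unique witnesses are both extracted from the same structural data attached to each $c\in C$.

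For each $c\in C$, I would fix an element $y_{c}\in Y^{(1)}$ as supplied by (b), and set $Y_{c}=Y^{(1)}\cap(y_{c}+m\mathbb{Z})$, $L_{c}=\{(y-y_{c})/m:y\in Y_{c}\}\subseteq\mathbb{Z}$, and $r_{c}=(c+y_{c})\bmod m$. The key upshot of (b) (combined with $(Y^{(1)}\bmod m)\cap X_{m}=\emptyset$) is that for any $a\in c+m\mathbb{Z}$ and $a'\in\mathbb{Z}$, the condition $(a+y_{c})-a'\in W$ forces $a'\equiv c\pmod{m}$ and $(a+y_{c})-a'\in Y_{c}$. Hence the candidate witness $z_{a}=a+y_{c}$ for $a\in A_{c}$ is uniquely represented in $A+W$ as long as no $a'\in A_{c}\setminus\{a\}$ satisfies $a-a'\in m(L_{c}\setminus\{0\})$; moreover $r_{c}\notin(C+X_{m})\bmod m$ is automatic.

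With this in hand I would construct $D_{c}\subseteq\mathbb{Z}$, unbounded below, satisfying the partition-type identity $\mathbb{Z}=D_{c}\sqcup(D_{c}+(L_{c}\setminus\{0\}))$; equivalently, $D_{c}+L_{c}=\mathbb{Z}$ and $D_{c}\cap(D_{c}+l)=\emptyset$ for every $l\in L_{c}\setminus\{0\}$. For any finite $L_{c}\ni 0$ such a $D_{c}$ can be produced by solving the covering/independence problem inside $\mathbb{Z}/n\mathbb{Z}$ for a sufficiently large modulus $n$ (greedily selecting residues) and then extending periodically. Setting $A_{c}=c+mD_{c}$ and $A=\bigcup_{c\in C}A_{c}$, the covering half of the identity yields $A_{c}+Y_{c}=r_{c}+m\mathbb{Z}$, while the independence half guarantees that $z_{a}=a+y_{c}$ is a bona fide unique witness for every $a\in A_{c}$ by the analysis of the previous paragraph.

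It then remains to verify $A+W=\mathbb{Z}$. Residues $r\in(C+X_{m})\bmod m$ are handled by taking $a\in A_{c}$ sufficiently negative with $a\equiv c\pmod{m}$ and using $m\mathbb{N}+X_{m}\subseteq W$; residues $r=r_{c}$ are handled by the identity $A_{c}+Y_{c}=r_{c}+m\mathbb{Z}$. The finitely many shifts from $Y^{(0)}$ cause no difficulty since $Y^{(0)}\bmod m\subseteq X_{m}$. The main obstacle I anticipate is showing that these two cases truly exhaust $\mathbb{Z}/m\mathbb{Z}$: either every $r\in(C+Y^{(1)})\bmod m\setminus(C+X_{m})\bmod m$ equals some $r_{c}$, or, failing that, the combined contributions $\bigcup(A_{c}+y)$ over all pairs $(c,y)\in C\times Y^{(1)}$ with $c+y\equiv r\pmod{m}$ still cover $r+m\mathbb{Z}$. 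Squeezing out this last piece of coverage is the delicate part of the argument and is where the full strength of (b) together with the precise arithmetic structure of $Y^{(1)}$ modulo $m$ must be marshalled.
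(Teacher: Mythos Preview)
Your instinct that the ``last piece of coverage'' is the crux is correct, but in fact the gap cannot be closed within your framework. Take $m=3$, $X_m=\{0\}$, $Y^{(0)}=\emptyset$, $Y^{(1)}=\{1,4,2,8\}$, and $C=\{0\}$. Conditions (a) and (b) hold (the latter vacuously), and $C_2=\{1,2\}$. If you pick $y_0\equiv 1$, then $L_0=\{0,1\}$ forces $D_0\in\{2\mathbb{Z},\,2\mathbb{Z}+1\}$; hence $A_0+\{2,8\}=2+3(D_0\cup(D_0+2))=2+3D_0$ has density $\tfrac12$ in $2+3\mathbb{Z}$ and the residue class $2$ is not covered. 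If instead $y_0\equiv 2$, then $L_0=\{0,2\}$ forces $D_0$ to be one of four sets of the form $(4\mathbb{Z}+a)\cup(4\mathbb{Z}+b)$ with $a\in\{0,2\}$, $b\in\{1,3\}$; for each of these, $D_0\cup(D_0+1)\ne\mathbb{Z}$, so $A_0+\{1,4\}$ misses part of $1+3\mathbb{Z}$. Thus no choice of $y_0$ and $D_0$ produces a complement. The difficulty is structural: building the $A_c$ independently, each tailored to its own $Y_c$, leaves you with no control over $A_c+y$ for $y\in Y^{(1)}\setminus Y_c$, and those are precisely the sums needed for the residues in $C_2\setminus\{r_c:c\in C\}$.

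The paper avoids this by reversing the order of the construction. Rather than first securing minimality (via the unique witnesses $z_a$) and then fighting for coverage, it first sets $C'=C+m\mathbb{Z}$, observes that $C'+Y^{(1)}\supseteq C_2'$, and invokes Nathanson's finite--set argument to extract $D'\subseteq C'$ which is \emph{minimal} for the covering $D'+Y^{(1)}\supseteq C_2'$. Coverage of $C_2'$ is then built in from the start. Condition~(b) is used afterwards, not to manufacture unique witnesses, but to show that such a minimal $D'$ must contain infinitely many negative elements in every residue class $c\in C$; this in turn yields $C_1'\subseteq D'+(m\mathbb{N}+X_m)$, completing the coverage of $\mathbb{Z}$. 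Minimality of $D'$ as a complement to $W$ then follows because removing any $d$ already breaks $D'+Y^{(1)}\supseteq C_2'$, and $C_2'$ can only be reached through $Y^{(1)}$. The moral is: let the minimality condition with respect to the finite set $Y^{(1)}$ drive the construction globally, and deduce the residue structure from~(b), rather than the other way around.
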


By Theorems \ref{thm3} and \ref{thm4}, we have the following
corollary.

\begin{corollary} Let $W=(m\mathbb{N}+X_m)\cup Y^{(0)}\cup \{a\},$
where $X_m\subseteq \{0,1,\dots ,m-1\},~(Y^{(0)}\mod m)\subseteq X_m$ and $a\not \equiv x~(\text{mod}~m)$ if $x\in X_m$. Then there exists a minimal complement to $W$
if and only if there exists a subset $C\subseteq \{0,1,\dots ,m-1\}$ such that:

(a) $C+(X_m\cup\{ a\})\mod m =\{0,1,\dots ,m-1\}$;

(b) For any $c\in C$, $c+a\not \equiv c'+x~(\text{mod}~m)$, where $c'\in C \setminus \{c\}$ and $x\in X_m$.
\end{corollary}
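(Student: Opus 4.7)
The plan is to read off the corollary as the specialization of Theorems \ref{thm3} and \ref{thm4} to the case $Y^{(1)}=\{a\}$, with a small bookkeeping check to reconcile the ``$c'\in C$'' in Theorem \ref{thm3}(b) with the ``$c'\in C\setminus\{c\}$'' in the corollary, and to absorb the extra choice $x=a$ in Theorem \ref{thm4}(b). Both reconciliations exploit the standing hypothesis that $a\not\equiv x\pmod{m}$ for every $x\in X_m$ together with the fact that the elements of $C\subseteq\{0,1,\dots,m-1\}$ are pairwise incongruent modulo $m$.

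For the ``only if'' direction, assume a minimal complement to $W$ exists. Applying Theorem \ref{thm3} with $Y^{(1)}=\{a\}$ yields $C\subseteq\{0,1,\dots,m-1\}$ satisfying condition (a) of the corollary, together with: for each $c\in C$, $c+a\not\equiv c'+x\pmod{m}$ for every $c'\in C$ and every $x\in X_m$. In particular, the case $c'=c$ is equivalent to the hypothesis $a\not\equiv x\pmod{m}$ and is automatic; discarding it gives exactly condition (b) of the corollary.

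For the ``if'' direction, assume $C$ satisfies (a) and (b) of the corollary. I would show that $C$ then satisfies the hypotheses of Theorem \ref{thm4} with $Y^{(1)}=\{a\}$, which would produce a minimal complement to $W$. Condition (a) is identical. For (b) of Theorem \ref{thm4}, we must verify that for each $c\in C$, $c+a\not\equiv c'+x\pmod{m}$ for every $c'\in C\setminus\{c\}$ and every $x\in X_m\cup\{a\}$. If $x\in X_m$, this is precisely (b) of the corollary. If $x=a$, the congruence reduces to $c\equiv c'\pmod{m}$, which cannot hold for distinct $c,c'\in\{0,1,\dots,m-1\}$. Hence Theorem \ref{thm4} applies.

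Since the argument is purely a matter of matching the hypotheses of the two theorems after setting $Y^{(1)}=\{a\}$, there is no genuine obstacle; the only point that requires care is ensuring that the quantifier ``for all $c'\in C$'' in Theorem \ref{thm3}(b) and the extended set ``$X_m\cup Y^{(1)}$'' in Theorem \ref{thm4}(b) both collapse correctly to the form stated in the corollary, which is handled by the two elementary observations above.
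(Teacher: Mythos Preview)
Your proposal is correct and follows exactly the approach the paper intends: the paper simply states the corollary as an immediate consequence of Theorems~\ref{thm3} and~\ref{thm4} without spelling out any details. Your two bookkeeping observations---that the case $c'=c$ in Theorem~\ref{thm3}(b) is automatic from $a\not\equiv x\pmod m$, and that the case $x=a$ in Theorem~\ref{thm4}(b) is ruled out because distinct elements of $\{0,\dots,m-1\}$ are incongruent modulo $m$---are precisely the verifications needed to match the hypotheses, and they are handled correctly.
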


We see that Theorems \ref{thm3} and \ref{thm4} transfer
Nathanson's problem into a finite modulo version when $W$ is an eventually periodic set.
In the next theorem, we give a sufficient and necessary condition, but we
cannot bound the module.

\begin{theorem}\label{thm5} Let W be defined in (1). There exists a minimal complement to $W$
if and only if there exists $T\in \mathbb{Z}^{+},m\mid T$,
and $C\subseteq \{0,1,\dots ,T-1\}$ such that

(a) $C+(X_T\cup Y^{(1)})\mod T=\{0,1,\dots ,T-1\}$, where $X_{T} = \cup_{i=0}^{\frac{T}{m}-1}\{im+X_{m}\}$;

(b) for any $c\in C$, there exists $y\in Y^{(1)}$ such that
$c+y\not \equiv c'+x~(\text{mod}~T)$ for any $c'\in C \setminus \{c\}$ and $x\in X_T\cup Y^{(1)}$.
\end{theorem}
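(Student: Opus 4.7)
I would prove the two implications separately.

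\emph{Sufficiency} ($\Leftarrow$). Given $T$ and $C$ satisfying (a), (b), I rewrite $W$ using $T$ as a period. Since $m\mid T$, the identity $m\mathbb{N}+X_m=T\mathbb{N}+X_T$ holds, and moreover $X_T\bmod m=X_m$. Combined with $Y^{(0)}\bmod m\subseteq X_m$ and $(Y^{(1)}\bmod m)\cap X_m=\emptyset$, this upgrades to $Y^{(0)}\bmod T\subseteq X_T$ and $(Y^{(1)}\bmod T)\cap X_T=\emptyset$. Thus $W=(T\mathbb{N}+X_T)\cup Y^{(0)}\cup Y^{(1)}$ is a valid instance of \eqref{def1} with period $T$, and the conditions (a), (b) of Theorem~\ref{thm5} are exactly the hypotheses of Theorem~\ref{thm4} applied with $T$ in place of $m$. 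Theorem~\ref{thm4} then produces a minimal complement.

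\emph{Necessity} ($\Rightarrow$). Let $A$ be a minimal complement to $W$. Since $W$ is bounded below, $A$ must be unbounded below in order that $A+W=\mathbb{Z}$. I would choose $T=Nm$ with $N$ large enough that $(Y^{(0)}\cup Y^{(1)})\bmod T$ may be identified with $Y^{(0)}\cup Y^{(1)}$ as a subset of $\{0,\dots,T-1\}$, and set
$$C=\{\,c\in A\bmod T:\ \text{$c$ admits arbitrarily small representatives in }A\,\}.$$
Condition (a) follows by reducing $A+W=\mathbb{Z}$ modulo $T$: for any $r\in\{0,\dots,T-1\}$, writing $n=r-kT$ for $k\to\infty$ in $n=a+w$ forces $a$ arbitrarily small (since $w$ is bounded below), so pigeonhole on such $a$'s places its residue in $C$ while the corresponding $w\bmod T$ lies in $X_T\cup Y^{(1)}$.

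For (b), fix $c\in C$. By minimality every $a\in A$ admits a witness $z_a$ uniquely covered by $a$, with $w_a:=z_a-a\in W$ and $z_a-a'\notin W$ for $a'\in A\setminus\{a\}$. I would first argue that some representative $a$ of $c$ has $w_a\in Y^{(1)}$: otherwise every witness for every representative of $c$ lies in $T\mathbb{N}+X_T$, and since $c\in C$ has infinitely many (arbitrarily negative) representatives in $A$, a translation-by-$T$ argument supplies an alternative $a'\in A$ of residue $c$ with $z_a-a'\in T\mathbb{N}+X_T$, violating uniqueness. Set $y:=w_a\in Y^{(1)}$. Suppose $c+y\equiv c'+x\pmod T$ for some $c'\in C\setminus\{c\}$ and $x\in X_T\cup Y^{(1)}$; pick $a'\in A$ with $a'\bmod T=c'$ and $a'\leq z_a$ (available since $c'\in C$). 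Then $z_a-a'\geq0$ and $z_a-a'\equiv x\pmod T$. When $x\in X_T$ this forces $z_a-a'\in T\mathbb{N}+X_T\subseteq W$, contradicting uniqueness of $z_a$. When $x\in Y^{(1)}$, the only $a'$ of residue $c'$ for which $z_a-a'\in W$ can occur is $a'=z_a-x$; a further uniqueness/minimality argument then shows such an $a'$ must lie in $A$, again yielding a contradiction.

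\emph{Main obstacle.} The two delicate technical steps are (i) showing that every $c\in C$ admits a representative whose uniquely-covered witness lies in $Y^{(1)}$ rather than in the periodic bulk $T\mathbb{N}+X_T$, and (ii) closing the case $x\in Y^{(1)}$ in the verification of~(b), where the candidate $a'=z_a-x$ is not manifestly in $A$. Both hinge on the translation invariance of the periodic part by $T$ together with the finiteness of $Y^{(1)}$ and the unboundedness of $A$ below, which are the tools available for converting any hypothetical failure into an alternative covering that contradicts the minimality of $A$.
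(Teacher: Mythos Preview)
Your sufficiency argument is fine and coincides with the paper's: rewrite $W$ with period $T$ (checking $Y^{(0)}\bmod T\subseteq X_T$ and $(Y^{(1)}\bmod T)\cap X_T=\emptyset$ via the mod-$m$ hypotheses) and invoke Theorem~\ref{thm4}.

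The necessity argument, however, has a genuine gap exactly where you flag it, and your choice of $T$ is not strong enough to close it. Your step (i) and the case $x\in X_T$ of step (ii) are correct: in both places the periodic bulk $T\mathbb{N}+X_T$ absorbs the translation $z_a-a'$ once $a'$ is chosen small enough in the right residue class. But in the case $x\in Y^{(1)}$ of (ii), the congruence $z_a-a'\equiv x\pmod T$ forces $(z_a-a')\bmod m\notin X_m$, so the only way $z_a-a'$ can lie in $W$ is $z_a-a'\in Y^{(1)}$; with $T$ large this pins down $a'=z_a-x$, and nothing in your setup guarantees this particular integer belongs to $A$. Your suggested ``further uniqueness/minimality argument'' does not exist at this level of generality: an arbitrary large multiple $T$ of $m$ carries no information linking $A$ near $z_a$ to $A$ near $z_a-T$.

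The paper closes precisely this gap by choosing $T$ very differently. Rather than taking $T$ large and setting $C$ to be the residues with infinitely many small representatives, the paper applies the pigeonhole principle to the $(2y_0+1)$-tuples $\bigl(\chi_E(A-im-y_0),\dots,\chi_E(A-im+y_0)\bigr)$ to find two indices $K<L$, both far to the left, with $\chi_E(K+j)=\chi_E(L+j)$ for all $|j|\le y_0$; then $T:=L-K$ and $C:=(E\cap[K,L))\bmod T$. This ``matching window'' property is exactly what handles the $x\in Y^{(1)}$ case: if $e+w=e'+w'\pm T$ with $w,w'\in Y^{(1)}$ and $e,e'\in E\cap[K,L)$, then $e'\mp T$ lies in the window around $K$ (resp.\ $L$) and hence, by the matching property, $e'\mp T\in E$, giving the forbidden second representation $e+w=(e'\mp T)+w'$. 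Your construction lacks any analogue of this local periodicity of $E$, which is why the argument stalls.
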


Finally, as a complement to Remark \ref{remark2}, we give the
following theorem.

\begin{theorem}\label{thm2} There exists an infinite, not
eventually periodic set $W\subseteq \mathbb{N}$ such that
$w_{i+1}-w_i\in \{1,2\}$ for all $i$ and there exists a minimal
complement to $W$.
\end{theorem}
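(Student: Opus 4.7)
The plan is to construct $W$ and a minimal complement $C$ explicitly. I would take
\[
W \;=\; 2\mathbb{N}\,\cup\,\{1\}\,\cup\,\{2^k+1 : k\ge 2\},
\qquad
C \;=\; \{0\}\,\cup\,\{-(6m+1) : m\ge 1\},
\]
and then verify (i) $W$ is infinite, has consecutive differences in $\{1,2\}$, and is not eventually periodic; (ii) $C+W=\mathbb{Z}$; and (iii) $C$ is minimal.

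For (i), observe that $W\supseteq 2\mathbb{N}$ already has all gaps equal to $2$, and each added odd element (namely $1$ and each $2^k+1$ with $k\ge 2$) splits one such gap into two gaps of size $1$; hence $w_{i+1}-w_i\in\{1,2\}$ throughout. The positions in $2\mathbb{N}$ at which a ``$1,1$'' substitution occurs form the set $\{0,2,4,8,16,32,\dots\}$, whose consecutive differences $2,2,4,8,16,\dots$ are unbounded and therefore not eventually periodic; so the gap sequence of $W$, and hence $W$ itself, is not eventually periodic.

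For (ii) I would split $n\in\mathbb{Z}$ into cases. For $n\ge 0$: if $n\in W$ take $(c,w)=(0,n)$, otherwise $n\ge 3$ is odd and $(c,w)=(-7,n+7)$ works since $n+7$ is a positive even integer. For negative even $n=-2j$ with $j\ge 1$, write $j=3m+r$ with $r\in\{0,1,2\}$ and use, respectively, $(-(6m+1),1)$, $(-(6m+7),5)$, or $(-(6m+13),9)$; the three shifts correspond to adding to $c$ an element of $W\cap(2\mathbb{Z}+1)$ in residues $1,5,3$ modulo $6$. Finally, for negative odd $n=-(2k+1)$, any sufficiently large $m$ gives $c=-(6m+1)\in C$ and $w=6m-2k\in 2\mathbb{N}\subseteq W$.

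The main obstacle is (iii), minimality. I would exhibit a private witness for each $c\in C$: the integer $n_0=6$ for $c=0$, and $n_{-(6m+1)}=-6m$ for each $c=-(6m+1)$. The verification rests on the single modular observation
\[
W\cap(6\mathbb{Z}+1)=\{1\},
\]
which follows from $2^k+1 \bmod 6$ being $5$ for even $k\ge 2$ and $3$ for odd $k\ge 3$, so that $A=\{2^k+1:k\ge 2\}$ lies only in residues $\{3,5\}\pmod 6$. Indeed, for any alternative $c'=-(6m'+1)\in C$ with $m'\ne m$, the candidate witness $w=n_c-c'=6(m'-m)+1$ is congruent to $1$ modulo $6$ and different from $1$, hence $w\notin W$; and $c=0$ cannot represent any negative integer because $W\subseteq\mathbb{N}$. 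So each element of $C$ is forced, completing the proof. The whole construction succeeds because $A$ is placed within residues $\{3,5\}\pmod 6$: dense enough there to let $C$ reach every negative integer, yet avoiding residue $1$ so that the modular obstruction blocks unintended representations.
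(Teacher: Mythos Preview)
Your construction is correct and complete. All three parts check out: the gap structure is forced by adjoining isolated odd numbers to $2\mathbb{N}$; the covering argument works because $\{1,5,9\}\subseteq W$ hits all three odd residues modulo $6$; and the minimality argument is clean, resting exactly on the identity $W\cap(6\mathbb{Z}+1)=\{1\}$ (which holds since $2^k\equiv 2$ or $4\pmod 6$ for $k\ge 1$). The witness $n_0=6$ for $c=0$ also falls under the same umbrella, since $6-c'=6m'+7\equiv 1\pmod 6$ and exceeds $1$.

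Your approach is genuinely different from the paper's. The paper builds $W$ and $C=\{c_1>c_2>\cdots\}$ by a recursive greedy process: at stage $i$ it picks the largest negative integer $d_i$ not yet covered, chooses $c_i$ far below $d_i+2c_{i-1}$, and extends $W$ on a new block $[-2c_{i-1},-2c_i-1]$ while deleting the single points $-c_i+d_j$ for $j<i$; the deletions guarantee that each $c_i$ is the \emph{only} element of $C$ representing $d_i$. By contrast, you write down $W$ and $C$ explicitly and replace the bookkeeping of stagewise deletions by a single modular obstruction. What the paper's method buys is flexibility (the freedom in choosing $c_i$ makes it easy to avoid eventual periodicity without any arithmetic analysis), while your method buys transparency: the set $W$ is concrete, the non-periodicity follows from the doubling spacing of the inserted odd points, and minimality reduces to a one-line residue computation.
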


Now we pose two problems for further research.

\begin{problem} We know that Theorem \ref{thm3} also holds when
$Y^{(1)}$ is infinite. Is Theorem \ref{thm4} also true when
$Y^{(1)}$ is infinite?
\end{problem}

\begin{problem} Does there exist an explicit formula for the upper bound of $T$ in Theorem 3
using $m,Y^{(0)}$ and $Y^{(1)}$?
\end{problem}

\section{Proofs}

%\begin{proof}[Proof of Theorem \ref{thm1}]
%Suppose that the period of $W$ is $T$ and $C$ is a minimal
%complement to $W$. Since $W$ is a qusiperiodic set, then there
%exists $k_0$ such that $w+kT\in W$ for all $w\in W$ and $k\ge
%k_0$.
%
%Now we shall prove that for any $c\in C$, $C\setminus \{c\}$ is
%also a complement to $W$. For any integer $n$, there exists an
%large integer $k\ge k_0$ such that $kT>n-\inf W-C$. Let
%$n-kT=w_n+c'$, where $w_n\in W$ and $c'\in C$. Then
%$$c'=n-kT-w_n<n-kT-\inf W<c.$$
%Hence $n=(w_n+kT)+c'\in W+C\setminus \{c\}$, and so $C\setminus
%\{c\}$ is also a complement to $W$, a contradiction.
%
%Therefore, there does not exist a minimal complement to $W$.
%
%\end{proof}

\begin{proof}[Proof of Theorem \ref{thm3}] Suppose that $D$ is a
minimal complement to $W$. For $i\in \{0,1,\ldots,m-1\}$, let
$D_i=\{d\in D:~d\equiv i~(\text{mod}~m)\}$ and
$$C=\{j:~0\le j\le m-1~\text{and}~|D_j\cap
\mathbb{Z}^{-}|=+\infty\}.$$ For any $t\in
\{0,1,\ldots,m-1\}\setminus C$, the set $\{d\in D:~d\equiv
t~(\text{mod}~m)\}+W$ does not contain any sufficiently small
negative integers. It follows from $D+W=\mathbb{Z}$ that
$C+W~\text{mod}~m =\{0,1,\dots ,m-1\}$. That is, $C+(X_m\cup Y^{(1)})~\text{mod}~m=\{0,1,\dots ,m-1\}$.

Next we shall prove (b). Suppose that there exists $c\in C$ such
that for any $y\in Y^{(1)}$ there exist $c'\in C$ and $x\in X_m$ with  $c+y\equiv c'+x~(\text{mod}~m)$. We take an integer $d\in D$ such that $d\equiv
c~(\text{mod}~m)$ and we shall prove that $D\setminus \{d\}$ is
also a complement to $W$. For any integer $n$, write $n=d'+w$,
where $d'\in D$ and $w\in W$.

Case 1. $d'\not=d$. Then $n=d'+w\in (D\setminus \{d\})+W$.

Case 2. $d'=d$.

Subcase 2.1. $(\{w\}~\text{mod}~m) \subseteq X_m$. In this case, there exists
a positive integer $k_0$ such that $w+km\in W$ for all integers
$k\ge k_0$. Since $|D_c\cap \mathbb{Z}^{-}|=+\infty$, it follows
that there exists an integer $k\ge k_0$ such that $d-km\in D$.
Hence $n=(d-km)+(w+km)$, where $d-km\in D\setminus \{d\}$ and
$w+km\in W$. That is, $n\in (D\setminus \{d\})+W$.

Subcase 2.2. $w\in Y^{(1)}$. Since $\{c+y\}~\text{mod}~m\subseteq C+X_m ~\text{mod}~m$
for any $y\in Y^{(1)}$ and $d\equiv c~(\text{mod}~m),~w\in
Y^{(1)}$, it follows that ~$\{n\}~\text{mod}~m=\{d+w\}~\text{mod}~m\subseteq C+X_m ~\text{mod}~m$. Hence
there exist a $c'\in D$ with $c'~(\text{mod}~m)\in C$ and $x\in W$
with $x~\text{mod}~m\in X_m$ such that $n\equiv
~c'+x~(\text{mod}~m)$. We choose a sufficiently large integer $k$
such that $c'-km\in D$,~$c'-km\not=d$ and $x+km\in W$. Hence
$n=(c'-km)+(x+km)$, where $c'-km\in D \setminus \{d\}$ and $x+km\in W$.

Hence, $(D\setminus \{d\})+W=\mathbb{Z}$ which contradicts the fact that $D$
is a minimal complement. Therefore, (b) holds.
\end{proof}

\begin{proof}[Proof of Theorem \ref{thm4}] Let
$C_1=C+X_m~\text{mod}~m$,~$C_2=\{0,1,\dots ,m-1\}\setminus C_1$,
$$C'=\{d\in\mathbb{Z}:~d\equiv c~(\text{mod}~m)~\text{for some}~c\in C\},$$
$$C_1'=\{d\in\mathbb{Z}:~d\equiv c~(\text{mod}~m)~\text{for some}~c\in C_1\},$$
$$C_2'=\{d\in\mathbb{Z}:~d\equiv c~(\text{mod}~m)~\text{for some}~c\in C_2\}.$$
By (a), we have $C'+W=\mathbb{Z}$. Since $C+X_m~\text{mod}~m=C_1$, it follows
that $$C'+(W\setminus
Y^{(1)})~\text{mod}~m=C+X_m~\text{mod}~m=C_1,$$ and so
$\left(C'+(W\setminus Y^{(1)})\right)\cap C_2'=\emptyset$. It follows from (b) that $C_2' \ne \emptyset$. Noting
that $C'+W=\mathbb{Z}$, we have $C'+Y^{(1)}\supseteq C_2'$. Since
$Y^{(1)}$ is a finite set, by the proof of Nathanson's theorem
(See \cite[Theorem 4, page 2015]{nath}), there exists $D'\subseteq C'$ such
that $D'+Y^{(1)}\supseteq C_2'$ and for any $d\in D'$,
$$(D'\setminus \{d\})+Y^{(1)}\not\supseteq C_2'.$$

Next we shall prove that $D'$ is a minimal complement to $W$.

For $i\in C$, let $D_i'=\{d\in D':d\equiv i~(\text{mod}~m)\}$. First we prove that $|D_i'\cap \mathbb{Z}^{-}|=+\infty$ for all $i\in
C$. Suppose that there exists a $j\in C$ such that $|D_j'\cap
\mathbb{Z}^{-}|<+\infty$. By (b), there exists a $y\in Y^{(1)}$ such
that $j+y\not \equiv c+x~(\text{mod}~m)$, where $c\in C\setminus \{j\}$, $x\in X_m\cup Y^{(1)}$ and so
$$D'+Y^{(1)}\not\supseteq \{d\in \mathbb{Z}:d\equiv  j+y~(\text{mod}~m)\}.$$

Noting that $(\{j+y\}~\text{mod}~m) \not\subseteq C+X_m ~\text{mod}~m=C_1$, we have $(\{j+y\}~\text{mod}~m)\subseteq
C_2$. It follows that $D'+Y^{(1)}\not\supseteq
C_2'$, a contradiction. Hence, $|D_i'\cap \mathbb{Z}^{-}|=+\infty$
for all $i\in C$.

Next we prove that $D^{'}$ is a complement.
For any integer $n\in C_1'$, by $C+X_m ~\text{mod}~m=C_1$, there exists $c\in C$
and $x\in X_m$ such that $n\equiv c+x~(\text{mod}~m)$. Since
$|D_c'\cap \mathbb{Z}^{-}|=+\infty$, there exists a sufficiently
small negative integer $d\in D_c'$ such that $n-d>0$. The congruences $n\equiv c+x ~(\text{mod}~m)$ and $d\equiv c ~(\text{mod}~m)$ imply that $n-d\equiv x ~(\text{mod}~m)$. Hence, $n-d\in m\mathbb{N}+X_m$ and so
$$n=d+(n-d)\in D_c'+(m\mathbb{N}+X_m)\subseteq D'+W.$$
Hence $C_1'\subseteq D'+W.$ On the other hand, $D'+W\supseteq
D'+Y^{(1)}\supseteq C_2'$. Therefore, $D'+W=\mathbb{Z}$.

Finally, we prove that $D^{'}$ is a minimal complement.
For any $d\in D'$,
%by $(D'\setminus \{d\})+Y^{(1)}\not\supseteq
%C_2'$, $|D^{'}_d\cap \mathbb{Z}^-|=\infty$. Obviously, $D' ~\text{mod}~m = C' ~%\text{mod}~m =C$ and $W\setminus Y^{(1)} ~\text{mod}~m =X_m$. Hence
we have $$\Big((D'\setminus \{d\})+(W\setminus
Y^{(1)})~\text{mod}~m\Big) \subseteq C+X_m ~\text{mod}~m=C_1.$$ It follows that $$\Big((D'\setminus
\{d\})+(W\setminus Y^{(1)})\Big)\cap C_2'=\emptyset,$$ and so
$(D'\setminus \{d\})+W\not\supseteq C_2'$. Hence $(D'\setminus
\{d\})+W\not=\mathbb{Z}$.

Therefore, $D'$ is a minimal complement to $W$.
\end{proof}

\begin{proof}[Proof of Theorem \ref{thm5}] Assume that the set $W$ satisfies the conditions of Theorem 3.
Applying Theorem 2 with $m = T$, it follows that $W$ has a
minimal complement.

Suppose that $W$ has a minimal complement $E$. We
will prove that there exist a positive integer $T$ and a set $C
\subseteq \{0,1, \dots{}, T-1\}$ satisfying the conditions of Theorem 3.

For $0 \le i < m$, let
\[
E^{-}_{i} = \{e: e < 0,e\in E, e \equiv i~(\text{mod}~m)\}.
\]
Let $0 \le i_{1} < i_{2} < \dots{} < i_{t} < m$ be the sequence of
indices with $|E^{-}_{i_{j}}| = \infty$. It is clear that there
exists an integer $N_{0}$ such that, $e \in E$ and $e \le
N_{0}$ imply that $e \in E_{i_{j}}$ for some $i_{j}$. It follows from Theorem 1 that $Y^{(0)} \cup Y^{(1)} \ne \emptyset$. Let
\[
y_{+} = \text{max}\{y: y \in Y^{(0)} \cup Y^{(1)}\},
\]
\[
y_{-} = \text{min}\{y: y \in Y^{(0)} \cup Y^{(1)}\},
\]
and $y_{0} = \max\{y_{+},-y_{-},y_{+}-y_{-}\}$. Let $\chi_{E}(k)$ denote the
characteristic function of the set $E$, i.e.,
\[
\chi_{E}(k) = \left\{
\begin{aligned}
1 \textnormal{, if } k \in E; \\
0 \textnormal{, if } k \notin E.
\end{aligned} \hspace*{3mm}
\right.
\]
%Put $y^{(-)} = -y_{+} + \text{min}\{0, y_{-}\}$, $y^{(+)} =
%-y_{-} + \text{max}\{0, y_{+}\}$ and
Let $A = N_{0} + \text{min}\{0, y_{-}\}$. We consider the
following vectors:
\begin{align*}
\textbf{v}_{A} & = \left( \chi_{E}(A-y_0),\chi_{E}(A-y_0+1), \dots{}, \chi_{E}(A+y_0)\right), \\
\textbf{v}_{A-m} & = \left( \chi_{E}(A-m-y_0),\chi_{E}(A-m-y_0+1), \dots{}, \chi_{E}(A-m+y_0)\right), \\
&\mathrel{\makebox[\widthof{=}]{\vdots}} \\
\textbf{v}_{A-im} & = \left( \chi_{E}(A-im-y_0),\chi_{E}(A-im-y_0+1), \dots{}, \chi_{E}(A-im+y_0)\right) ,\\
&\mathrel{\makebox[\widthof{=}]{\vdots}}
\end{align*}
It is clear that there are infinitely many vectors
$\textbf{v}_{A}, \textbf{v}_{A-m}, \dots{},  \textbf{v}_{A-im},
\dots{},$ each of them has $2y_{0}+1$ coordinates, which are $0$ or
$1$. Since there are at most $2^{2y_{0}+1}$ different vectors,
by the pigeon hole principle, there exist a vector $\textbf{v}$
and an infinite sequence $0\le k_{1} < k_{2} <
\cdots{}$ such that $\textbf{v}_{A-k_{i}m} =  \textbf{v}$ for all positive integer $i$.

Define $L = A-k_{1}m$ and choose a sufficiently large integer $k_{i}$ such that
$k_{i}m - k_{1}m \ge y_{0}$
and $[A-k_{i}m, A-k_{1}m[ \cap E_{i_{j}} \ne \emptyset$ for every index $i_{j}$. Let $K = A - k_{i}m$,
$T = L-K$ and
\[
C = \{l: K \le l < L, ~ l \in E\}~\text{mod}~T.
\]

Now we shall prove that such an integer $T$ and a set $C$ satisfy the conditions of Theorem 3.
By definitions, $K$ and $L$ have the following properties.
\begin{equation}
L \le N_{0} + \text{min}\{0, y_{-}\},
\end{equation}
\begin{equation}
L-K\ge y_{0},
\end{equation}
\begin{equation}
m \mid L - K,
\end{equation}
\begin{equation}
\chi_{E}(K+i) = \chi_{E}(L+i)~ for~
-y_{0} \le i \le +y_{0},
\end{equation}
\begin{equation}
[K, L[ \cap E_{i_{j}} \ne \emptyset~\text{for all}~i_j.
\end{equation}

First we prove that $C + (X_{T} \cup Y^{(1)})~\text{mod}~T =
\{0,1,\ldots, T-1\}$, where $X_{T} = \bigcup_{i=0}^{\frac{T}{m}-1}\{im+X_{m}\}$.
For any integer $l$ with $K \le l < L$, there exist $e \in E$ and $w \in W$ such that $l = e + w$.
As $w \ge \text{min}\{0, y_{-}\}$,
it follows from (2) that $e = l - w < L - \text{min}\{0, y_{-}\}
\le N_{0}$, and so $e \in E_{i_{j}}^{-}$ for some integer $i_j$.
Suppose that $w \in  Y^{(0)} \cup Y^{(1)}$. Then we have $y_{-}
\le w \le y_{+}$ and $K-y_{+}
\le e =l-w< L-y_{-}$. We have three cases.

Case 1. $K - y_{+} \le e < K$. Noting that
$$K-y_0\le K-y_+\le e<K<K+y_0,$$
by (5), we have $e+(L-K)\in E$. By $K - y_{+} \le e < K$ and (3), it follows that
$K\le L-y_+\le e+(L-K)<L$. Let $c=e+(L-K)$. Then $c~\text{mod}~T \in C$ and $l\equiv c+w~(\text{mod}~T)$.
Hence $l~\text{mod}~T\in C + (X_{T} \cup Y^{(1)})~\text{mod}~T$.

Case 2. $K \le e < L$. It follows that $e~\text{mod}~T\in C$ and so
$l~\text{mod}~T \in C + (X_{T} \cup Y^{(1)})~\text{mod}~T$.

Case 3. $L \le e < L - y_{-}$. Noting that
$$L-y_0\le L\le e<L-y_{-}\le L+y_0,$$
by (5), we have $e - (L - K) \in E$. Since $L \le e < L - y_{-}$ and (3), it follows that
$K \le e - (L - K) < K - y_{-}<L$. Let $c=e-(L-K)$. Then $c~\text{mod}~T \in C$ and
$l\equiv c+w~(\text{mod}~T)$. Hence $l~\text{mod}~T\in C + (X_{T} \cup Y^{(1)})~\text{mod}~T$.

Suppose that $w \in T\mathbb{N} + X_{T}$. Since $w \ge 0$ and $e = l - w < L \le N_{0}$, we have $e \in E_{i_{j}}^{-}$ for some integer
$i_j$, and so $e \equiv i_{j}~(\text{mod}~m)$. It follows from (6) that
there exists an integer $e^{'}$ such that $e^{'} \in E_{i_{j}}^{-}$, $K \le
e^{'} < L$ and $e^{'} \equiv i_{j}~(\text{mod}~m)$. Let $w
\equiv x~(\text{mod}~m)$, where $x\in X_m$. Obviously, $l
\equiv e + w \equiv i_{j} + x \equiv e^{'} + x~(\text{mod}~m)$. By
(4) there exists an integer $u$ with $0 \le u < \frac{T}{m}$
such that $l \equiv e^{'} + um + x~(\text{mod}~T)$. Thus $l~\text{mod}~T\in C + (X_{T} \cup Y^{(1)})~\text{mod}~T$.

Therefore, $C + (X_{T} \cup Y^{(1)})~\text{mod}~T=\{0,1,\ldots,T-1\}$.

In the next step we show that the second condition of Theorem 3
holds. For any integer $e\in E$ with $K \le e < L$, there
exists a $w \in W$ such that $e + w \ne e^{'} + w^{'}$ for any $e' \ne
e$, $e'\in E$ and $w'\in W$.

If $w \in (m\mathbb{N} + X_{m}) \cup Y^{(0)}$, by $e<L\le N_0$ and $e\in E$,
then there
exists a positive integer $s$ such that $e + w = (e - sm) + (w +
sm)$, where $e - sm \in E$ and $w + sm \in W$. This is a contradiction.

Now we may assume that $w \in Y^{(1)}$. It is enough to prove
that $e + w \not\equiv e^{'} + w^{'}~(\text{mod}~T)$ for any $e^{'}(\not=e)\in E$, $K \le
e^{'} < L$ and $w^{'} \in X_{T} \cup (Y^{(1)}~\text{mod}~T)$. Suppose that such $e'$ and $w'$ 
exist, i.e., $e+w\equiv e'+w'~(\text{mod}~T)$.

If $w^{'} \in X_{T}$, then $e + w = e^{'} + w^{'}
+ tT$ for some integer $t$. Hence there exists a positive integer $s$ such that
$e + w = (e^{'} - sm) + (w^{'} + sm + tT)$, where
$e^{'} - sm  \in E$ and $w^{'} + sm + tT \in W$. This is a contradiction.

Now we assume $w^{'} \in Y^{(1)}$. Then $K + y_{-} \le  e +
w, e^{'} + w^{'} < L + y_{+}$. By $T=L-K\ge y_0\ge y_+-y_-$, it follows that either $e + w = e^{'} +
w^{'}$ or $e + w = e^{'} + w^{'} +T$ or $e + w = e^{'} +
w^{'} - T$.

Case 1. $e + w = e^{'} + w^{'}$. Then we have $e =
e^{'}$ and $w = w^{'}$, a contradiction.

Case 2. $e + w = e^{'} +
w^{'} +T$. It follows that $K + y_{-} \le e^{'} + w^{'} < K +
y_{+}$, and so $$K-y_0\le K + y_{-} -  y_{+} \le e^{'} < K + y_{+} -
y_{-}\le K+y_0.$$
By (5), we have $e^{'} + T\in E$, and then $e + w$ has another representation $(e^{'} +T) + w^{'}$.
Therefore $w
= w^{'}$ and $e = e^{'} + T$, which contradicts with $K \le
e, e^{'} < L$.

Case 3. $e + w = e^{'} + w^{'} - T$. Then we have
$L + y_{-} \le e^{'} + w^{'} < L + y_{+}$. Thus $L-y_0\le L +
y_{-} -  y_{+} \le e^{'} < L + y_{+} - y_{-}\le L+y_0$. It follows from
(5) that $e^{'} - T \in E$ which implies that $e + w =
(e^{'} - T) + w^{'}$. Therefore $w = w^{'}$ and $e =
e^{'} -T$, which is a contradiction because $K \le e, e^{'}
< L$.

The proof of Theorem 3 is completed.
\end{proof}

\begin{proof}[Proof of Theorem \ref{thm2}]

By induction we can construct
$\{d_i\}_{i=1}^{\infty},~\{W_i\}_{i=1}^{\infty}$ and
~$\{c_i\}_{i=1}^{\infty}$ such that

(i) $d_1=-1,~W_1=\{1,2,\ldots,12\},~c_1=-3$;

(ii) $d_i$ is the largest negative integer $\not\in
W_{i-1}+\{c_1,c_2,\ldots,c_{i-1}\}$ for $i\ge 2$;

(iii) $c_i<d_i+2c_{i-1}$ for all $i\ge 2$;

(iv) for $i\ge 2$, let $W_i=W_{i-1}\cup
\left([-2c_{i-1},-2c_i-1]\setminus \cup_{j=1}^{i-1}
\{-c_i+d_{j}\}\right).$

Let $W=\cup_{i=1}^{\infty}W_i$ and $C=\{c_i\}_{i=1}^{\infty}$.

Now we prove that $C$ is a minimal complement to $W$.

First we prove $d_{i+1}-d_i\le -2$ for all integers $i\ge 1$.
Clearly $d_2=-3,~d_2-d_1=-2$. Suppose that $d_{i+1}-d_i\le -2$ for
all integers $i<k~(k\ge 2)$. Since
$$d_k=(d_k-c_k)+c_k,\quad d_k-1=(d_k-1-c_k)+c_k,$$
$$-2c_{k-1}\le d_k-1-c_k<d_k-c_k<-c_k+d_{k-1},$$
it follows that $d_k-c_k,~d_k-1-c_k\in W_k$ and then
~$d_k,d_k-1\in W_k+\{c_k\}$. Hence $d_{k+1}\le d_k-2$. By (iv), we
have $w_{j+1}-w_j\in \{1,2\}$. Since $d_k\rightarrow -\infty$, by (ii) we
have $(-\infty,9]\subseteq W+C$. For any integer $n\ge 10$, there
exists an $i$ such that $-c_{i-1}\le n<-c_i$. Hence
$$-c_i+d_1<-c_{i-1}-c_i\le n-c_i<-2c_i,$$
and so $n-c_i\in W_i$, that is, $n\in W_i+\{c_i\}$. Therefore,
$W+C=\mathbb{Z}$.

Next, we prove that the complement $C$ is minimal. For any
positive integer $i$, we write $d_i=c+w$ with $c\in C$ and $w\in
W$. Now we shall prove that $c=c_i$. By (iv), we have
$d_i-c_j\not\in W$ for all integers $j>i$. Hence $c\not=c_j$ for
all integers $j>i$. Since $-2c_{i-1}$ is the minimal value of
$W\setminus W_{i-1}$ and for any positive integers $j\le i-1$,
$d_i-c_j\le d_i-c_{i-1}<-2c_{i-1}$, it follows that
$d_i-c_j\not\in W\setminus W_{i-1}$ for all positive integer $j\le
i-1$. Noting that $d_i\not\in W_{i-1}+\{c_1,\ldots,c_{i-1}\}$, we
have $d_i\not\in W+\{c_1,c_2,\ldots,c_{i-1}\}$. Hence $c=c_i$.

Therefore, $C$ is a minimal complement to $W$. Furthermore, by
(iii), we can choose suitable $c_i$ such that $W$ is infinite and
not eventually periodic.
\end{proof}

\section{Acknowledgement} This work was done during the third author visiting to Budapest
University of Technology and Economics. He would like to thank Dr.
S\'andor Kiss and Dr. Csaba S\'{a}ndor for their warm hospitality. We would like
to thank the anonymous referee very much for the detailed comments.


\begin{thebibliography}{30}

\bibitem{chen} Y.-G. Chen, J.-H. Fang, {\it On additive complements, II,} Proc. Amer. Math. Soc. 139 (3) (2011) 881-883.

\bibitem{chenyang} Y.-G. Chen, Q.-H. Yang, {\it On a problem of Nathanson related to minimal
additive complements} SIAM J. Discrete Math. 26 (4) (2012)
1532-1536.

\bibitem{fang} J.-H. Fang, Y.-G. Chen, {\it On additive complements,} Proc. Amer. Math. Soc. 138 (6) (2010) 1923-1927.

\bibitem{nath} M.-B. Nathanson, {\it Problems in additive number theory, IV: Nets in groups and
shortest length g-adic representations,} Int. J. Number Theory 7 (3) (2011) 1999-2017.

\bibitem{ruzsa78} I. Z. Ruzsa, {\it On the additive completion of linear recurrence sequences,} Periodica Math. Hungar 9 (1978) 285-291.

\bibitem{ruzsa01} I. Z. Ruzsa, {\it Additive completion of lacunary sequences,} Combinatorica 21 (2) (2001) 279-291.

\end{thebibliography}
\end{document}